\documentclass[12pt]{article}
\usepackage{amsfonts}


\newtheorem{theorem}{Theorem}

\newtheorem{corollary}[theorem]{Corollary}

\newtheorem{definition}[theorem]{Definition}
\newtheorem{example}[theorem]{Example}

\newtheorem{proposition}[theorem]{Proposition}
\newtheorem{remark}[theorem]{Remark}

\setlength{\topmargin}{-2cm}
\setlength{\textheight}{24 cm}
\setlength{\textwidth}{15 cm}
\setlength{\oddsidemargin}{1cm}
\setlength{\evensidemargin}{1cm}
\newenvironment{proof}[1][Proof]{\noindent\textbf{#1.} }{\ \rule{0.5em}{0.5em}}
\input{tcilatex}
\begin{document}

\title{Landweber-type operator and its properties}
\author{Andrzej Cegielski \and {\small Faculty Mathematics, Computer Science
and Econometrics } \and {\small University of Zielona G\'{o}ra, ul. Szafrana
4a, 65-516 Zielona G\'{o}ra, Poland, } \and {\small e-mail:
a.cegielski@wmie.uz.zgora.pl} \and {\small \medskip } \and {\small %
Department of Mathematics, Kuwait University, } \and {\small P.O. Box 5969,
Safat 13060, Kuwait}}
\maketitle

\begin{abstract}
Our aim is to present several properties of a Landweber operator and of a
Landweber-type operator. These operators are widely used in methods for
solving the split feasibility problem and the split common fixed point
problem. The presented properties can be used in proofs of convergence of
related algorithms.
\end{abstract}

\section{Preliminaries}

In this section we recall some notions and facts which we will use in the
further part of the paper. Let $\mathcal{H}$ be a real Hilbert space
equipped with an inner product $\langle \cdot ,\cdot \rangle $ and with the
corresponding norm $\Vert \cdot \Vert $. We say that an operator $S:\mathcal{%
H}\rightarrow \mathcal{H}$ is \textit{nonexpansive} (NE) if for all $x,y\in 
\mathcal{H}$ it holds $\left\Vert Sx-Sy\right\Vert \leq \left\Vert
x-y\right\Vert $. We say that $S$ is $\alpha $-\textit{averaged}, where $%
\alpha \in (0,1)$, if $S=(1-\alpha )\limfunc{Id}+\alpha N$ for a
nonexpansive operator $N$. We say that $S$ is \textit{firmly nonexpansive}
(FNE) if for all $x,y\in \mathcal{H}$ it holds 
\begin{equation}
\langle Sx-Sy,x-y\rangle \geq \left\Vert Sx-Sy\right\Vert ^{2}\text{.}
\label{e-FNE}
\end{equation}%
By $\limfunc{Id}$ we denote the \textit{identity} operator. If follows from (%
\ref{e-FNE}) that $S$ is FNE if and only if $\limfunc{Id}-S$ is FNE. Denote
by $S_{\lambda }:\limfunc{Id}+\lambda (S-\limfunc{Id})$ a $\lambda $\textit{%
-relaxation} of $S$, where $\lambda \geq 0$. $S$ is FNE if and only if $%
S_{\lambda }$ is NE for all $\lambda \in \lbrack 0,2]$. Moreover, $S$ is a $%
\lambda $-relaxation of an FNE operator if and only if $S$ is $\frac{\lambda 
}{2}$-averaged, $\lambda \in (0,2)$. Let $C\subseteq \mathcal{H}$ be a
nonempty closed convex subset. Then for any $x\in \mathcal{H}$ there is a
unique $y\in C$ satisfying $\left\Vert y-x\right\Vert \leq \left\Vert
z-x\right\Vert $ for all $z\in C$. This point is denoted by $P_{C}x$ and is
called the \textit{metric projection} of $x$ onto $C$. The metric projection 
$P_{C}$ is an FNE operator. By $\limfunc{Fix}S:=\{z\in \mathcal{H}:Sx=x\}$
we denote the subset of \textit{fixed points} of $S$. We say that an
operator $S:\mathcal{H}\rightarrow \mathcal{H}$ having a fixed point is $%
\alpha $-\textit{strongly quasi-nonexpansive} ($\alpha $-SQNE), where $%
\alpha \geq 0$, if for all $x\in \mathcal{H}$ and all $z\in \limfunc{Fix}S$
it holds 
\begin{equation}
\left\Vert Sx-z\right\Vert ^{2}\leq \left\Vert x-z\right\Vert ^{2}-\alpha
\left\Vert Sx-x\right\Vert ^{2}\text{.}  \label{e-SQNE}
\end{equation}%
If $\alpha >0$, then we call an $\alpha $-SQNE operator \textit{strongly
quasi-nonexpansive }(SQNE). A $0$-SQNE operator is called\textit{\
quasi-nonexpansive} (QNE). A $\lambda $-relaxation of an FNE operator having
a fixed point is $\frac{2-\lambda }{\lambda }$-SQNE, $\lambda \in (0,2]$. We
say that $S$ is a \textit{cutter} if 
\begin{equation}
\langle x-Sx,z-Sx\rangle \leq 0  \label{e-cutter}
\end{equation}%
for all $x\in \mathcal{H}$ and all $z\in \limfunc{Fix}S$. An operator $U:%
\mathcal{H}\rightarrow \mathcal{H}$ is $\alpha $-SQNE, if and only if 
\begin{equation}
\lambda (Ux-x,z-x\rangle \geq \left\Vert Ux-x\right\Vert ^{2}
\label{e-rel-cutter}
\end{equation}%
for all $x\in \mathcal{H}$ and all $z\in \limfunc{Fix}U$, where $\lambda =%
\frac{2}{\alpha +1}$. A FNE operator having a fixed point is a cutter. An
operator $S$ is a cutter if and only if $S_{\lambda }$ is $\frac{2-\lambda }{%
\lambda }$-SQNE, $\lambda \in (0,2]$. An extended collection of properties
of FNE as well as SQNE operators can be found, e.g., in \cite[Chapter 2]%
{Ceg12}.

\section{Landweber-type operator}

Let $\mathcal{H}_{1},\mathcal{H}_{2}$ be real Hilbert spaces, $A:\mathcal{H}%
_{1}\rightarrow \mathcal{H}_{2}$ be a bounded linear operator with $\Vert
A\Vert >0$, $C\subseteq \mathcal{H}_{1}$ and $Q\subseteq \mathcal{H}_{2}$ be
nonempty closed convex subsets.

\begin{definition}
\rm\ %
(\cite{Byr02}, \cite{Ceg14}) An operator $V:\mathcal{H}_{1}\rightarrow 
\mathcal{H}_{1}$ defined by 
\begin{equation}
V:=\limfunc{Id}+\frac{1}{\Vert A\Vert ^{2}}A^{\ast }(P_{Q}-\limfunc{Id})A
\label{e-LO}
\end{equation}%
is called a \textit{Landweber operator}. An operator $U:\mathcal{H}%
_{1}\rightarrow \mathcal{H}_{1}$ defined by 
\begin{equation}
U:=P_{C}(\limfunc{Id}+\frac{1}{\Vert A\Vert ^{2}}A^{\ast }(P_{Q}-\limfunc{Id}%
)A)  \label{e-PLO}
\end{equation}%
is called a \textit{projected Landweber operator}. An operator $R_{\lambda }:%
\mathcal{H}_{1}\rightarrow \mathcal{H}_{1}$ 
\begin{equation}
R_{\lambda }:=P_{C}(\limfunc{Id}+\frac{\lambda }{\Vert A\Vert ^{2}}A^{\ast
}(P_{Q}-\limfunc{Id})A)\text{,}  \label{e-PRLO}
\end{equation}%
where $\lambda \in (0,2)$, is called a \textit{projected relaxation} of a
Landweber operator $V$.
\end{definition}

A Landweber operator was applied by Landweber \cite{Lan51} in a method for
approximating least-squares solution of a first kind integral equation.
Censor and Elfving \cite{CE94} introduced the following problem: 
\begin{equation}
\text{find }x^{\ast }\in C\text{ with }Ax^{\ast }\in Q  \label{e-SFP}
\end{equation}%
and called it the \textit{split feasibility problem} (SFP). Byrne \cite%
{Byr02} applied a projected relaxation of a Landweber operator for solving
the SFP. Now we introduce a more general operator than the defined by (\ref%
{e-LO}).

\begin{definition}
\rm\ %
Let $T:\mathcal{H}_{2}\rightarrow \mathcal{H}_{2}$ be quasi-nonexpansive. An
operator $V:\mathcal{H}_{1}\rightarrow \mathcal{H}_{1}$ defined by 
\begin{equation}
V:=\limfunc{Id}+\frac{1}{\Vert A\Vert ^{2}}A^{\ast }(T-\limfunc{Id})A
\label{e-LtO}
\end{equation}%
is called a \textit{Landweber-type operator }(related to $T$).
\end{definition}

Because $P_{Q}$ is quasi-nonexpansive, (\ref{e-LO}) is a special case of (%
\ref{e-LtO}). The operators defined by (\ref{e-LO}), (\ref{e-PRLO}) and (\ref%
{e-LtO}) were applied by many authors for solving the split feasibility
problem, the multiple split feasibility problem and the split common fixed
point problem (see \cite{Byr02, Ceg14, CS09, Mou10, LMWX12, WX11, Xu10} and
the references therein).

\section{Properties of a Landweber-type operator}

Let $A:\mathcal{H}_{1}\rightarrow \mathcal{H}_{2}$ be a bounded linear
operator with $\Vert A\Vert >0$ and $Q\subseteq \mathcal{H}_{2}$ be closed
convex. By $\func{im}A$ we denote the image of $A$. The Landweber operator $%
V $ is closely related to a proximity function $f:\mathcal{H}_{1}\rightarrow 
\mathbb{R}
_{+}$ defined by the following equality 
\begin{equation}
f(x)=\frac{1}{2}\left\Vert P_{Q}(Ax)-Ax\right\Vert ^{2}\text{.}  \label{e-PF}
\end{equation}%
This relation is expressed in the following result, which proof can be found
in \cite[Proposition 2.1]{Byr02} or \cite[Lemma 4.6.2]{Ceg12}.

\begin{proposition}
\label{p-prox}The proximity function $f:\mathcal{H}_{1}\rightarrow 
\mathbb{R}
$ defined by (\ref{e-PF}) is a differentiable convex function and $%
Df=A^{\ast }(Ax-P_{Q}(Ax))$. Moreover, $\limfunc{Fix}V=\limfunc{Argmin}%
_{x\in \mathcal{H}_{1}}f$, where $V:=\limfunc{Id}+\frac{1}{\Vert A\Vert ^{2}}%
A^{\ast }(P_{Q}-\limfunc{Id})A$ denotes the Landweber operator.
\end{proposition}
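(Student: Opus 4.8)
The plan is to establish the proposition in three stages: first differentiability together with the formula for the gradient, then convexity, and finally the characterization of $\limfunc{Fix}V$ as the set of minimizers of $f$.

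First I would compute $Df$ directly. Writing $g(y):=\frac{1}{2}\|P_Q y-y\|^2$ for $y\in\mathcal{H}_2$, we have $f=g\circ A$, so by the chain rule it suffices to differentiate $g$ and then compose with the bounded linear operator $A$ (whose derivative is $A$ itself, with adjoint $A^{\ast}$ appearing via the inner product). The key analytic fact is that the squared-distance function $y\mapsto\frac{1}{2}\operatorname{dist}(y,Q)^2=\frac{1}{2}\|y-P_Qy\|^2$ is differentiable with gradient $y-P_Qy$; this is a standard consequence of the firm nonexpansiveness of $P_Q$. Combining these gives $Df(x)=A^{\ast}(Ax-P_Q(Ax))$, matching the stated expression. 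The main obstacle here is justifying the gradient of $g$ rigorously: I would verify it from the definition of the Fr\'echet derivative, estimating the remainder $\frac{1}{2}\|P_Q(y+h)-(y+h)\|^2-\frac{1}{2}\|P_Qy-y\|^2-\langle y-P_Qy,h\rangle$ and using the nonexpansiveness of $P_Q$ (equivalently, the variational inequality characterizing $P_Q$) to show it is $o(\|h\|)$.

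Second, for convexity I would use the fact that $g$ is a composition of the convex function $\tfrac12\operatorname{dist}(\cdot,Q)^2$ with the affine map $A$; since $Q$ is closed and convex, the squared distance to $Q$ is convex, and convexity is preserved under precomposition with the linear operator $A$. Alternatively, now that $Df$ is known, convexity follows from monotonicity of the gradient: one checks $\langle Df(x)-Df(x'),x-x'\rangle\geq 0$, which reduces via $Df=A^{\ast}(\limfunc{Id}-P_Q)A$ to the monotonicity of $\limfunc{Id}-P_Q$, and the latter holds because $P_Q$ is FNE (so $\limfunc{Id}-P_Q$ is FNE, hence monotone).

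Finally, for the fixed-point characterization I would note that $V=\limfunc{Id}-\frac{1}{\|A\|^2}Df$, since $A^{\ast}(P_Q-\limfunc{Id})A=-Df$. Hence $x\in\limfunc{Fix}V$ if and only if $Df(x)=0$. Because $f$ is convex and differentiable, $Df(x)=0$ is equivalent to $x$ being a global minimizer of $f$, i.e.\ $x\in\limfunc{Argmin}_{x\in\mathcal{H}_1}f$. This gives $\limfunc{Fix}V=\limfunc{Argmin}_{x\in\mathcal{H}_1}f$ and completes the proof. I expect the differentiability computation in the first stage to be the only genuinely delicate point; the remaining steps are formal consequences once the gradient formula and convexity are in hand.
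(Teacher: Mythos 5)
Your proposal is correct, and it is essentially the canonical argument: the gradient formula for $g(y)=\frac{1}{2}\Vert y-P_{Q}y\Vert ^{2}$ (with the remainder in the Fr\'{e}chet estimate controlled by the nonexpansiveness of $\limfunc{Id}-P_{Q}$), the chain rule through the bounded linear $A$, convexity of the squared distance composed with an affine map, and the identity $V=\limfunc{Id}-\frac{1}{\Vert A\Vert ^{2}}Df$ combined with first-order optimality for convex differentiable functions. Note that the paper itself does not prove this proposition but defers to \cite[Proposition 2.1]{Byr02} and \cite[Lemma 4.6.2]{Ceg12}; your route coincides with the standard proof given in those references, so there is nothing to correct.
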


A sequence $\{U^{k}x\}_{k=0}^{\infty }$, where $x\in \mathcal{H}$ is called
an \textit{orbit} of an operator $U:\mathcal{H}\rightarrow \mathcal{H}$.
Byrne \cite[Theorem 2.1]{Byr02} proved that any orbit of the a projected
relaxation $R_{\lambda }$ of a Landweber operator converges to a solution of
the SFP in the case $\mathcal{H}_{1}$ and $\mathcal{H}_{2}$ are Euclidean
spaces, $\lambda \in (0,2)$. Xu \cite{Xu06} observed that any orbit of $%
R_{\lambda }$ converges weakly in the infinite dimensional case.

Below we give an important property of a Landweber-type operator.

\begin{proposition}
\label{p-FNE}If $T:\mathcal{H}_{2}\rightarrow \mathcal{H}_{2}$ is firmly
nonexpansive, then a Landweber-type operator defined by (\ref{e-LtO}) is
firmly nonexpansive.
\end{proposition}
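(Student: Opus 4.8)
The plan is to verify the defining inequality (\ref{e-FNE}) for $V$ directly. First I would fix $x,y\in\mathcal{H}_1$ and abbreviate $u:=x-y$, $p:=Ax$, $q:=Ay$, so that $Au=p-q$ and, by (\ref{e-LtO}),
$$Vx-Vy=u+\frac{1}{\Vert A\Vert^2}A^\ast d,\qquad d:=(Tp-Tq)-(p-q).$$
The key identity to record at this stage is $-d=(\limfunc{Id}-T)p-(\limfunc{Id}-T)q$, which is what will eventually let me appeal to the firm nonexpansiveness of $\limfunc{Id}-T$.

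Next I would form $\langle Vx-Vy,x-y\rangle-\Vert Vx-Vy\Vert^2$, substituting the expression above and using $\langle A^\ast w,u\rangle=\langle w,Au\rangle=\langle w,p-q\rangle$ to move the adjoint across each inner product. The inner product $\langle Vx-Vy,x-y\rangle$ contributes $\frac{1}{\Vert A\Vert^2}\langle d,p-q\rangle$, while expanding $\Vert Vx-Vy\Vert^2$ contributes $\frac{2}{\Vert A\Vert^2}\langle d,p-q\rangle$; since the latter is subtracted, the two combine into $-\frac{1}{\Vert A\Vert^2}\langle d,p-q\rangle$, and the expression should collapse to
$$\langle Vx-Vy,x-y\rangle-\Vert Vx-Vy\Vert^2=-\frac{1}{\Vert A\Vert^2}\langle d,p-q\rangle-\frac{1}{\Vert A\Vert^4}\Vert A^\ast d\Vert^2.$$
It then remains only to show that the right-hand side is nonnegative.

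This is where the normalizing factor $\Vert A\Vert^{2}$ does its work. Using $\Vert A^\ast\Vert=\Vert A\Vert$ I would estimate $\Vert A^\ast d\Vert^2\le\Vert A\Vert^2\Vert d\Vert^2$, which yields
$$-\frac{1}{\Vert A\Vert^2}\langle d,p-q\rangle-\frac{1}{\Vert A\Vert^4}\Vert A^\ast d\Vert^2\ge\frac{1}{\Vert A\Vert^2}\Bigl(-\langle d,p-q\rangle-\Vert d\Vert^2\Bigr).$$
Substituting $-d=(\limfunc{Id}-T)p-(\limfunc{Id}-T)q$ turns the bracketed quantity into exactly $\langle(\limfunc{Id}-T)p-(\limfunc{Id}-T)q,p-q\rangle-\Vert(\limfunc{Id}-T)p-(\limfunc{Id}-T)q\Vert^2$, which is the firm nonexpansiveness inequality (\ref{e-FNE}) written for the operator $\limfunc{Id}-T$ at the points $p,q$.

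The final step is to invoke the fact recorded just after (\ref{e-FNE}) that $T$ is FNE if and only if $\limfunc{Id}-T$ is FNE; since $T$ is assumed firmly nonexpansive, the bracket is nonnegative, hence so is the whole difference, which is precisely the FNE property of $V$. I expect the only delicate point to be the bookkeeping in the middle computation, namely keeping track of the single versus double occurrence of $\langle d,p-q\rangle$ so that the coefficients combine with the correct sign. Once that identity is secured, the estimate $\Vert A^\ast\Vert\le\Vert A\Vert$ reduces the claim entirely to the firm nonexpansiveness of $\limfunc{Id}-T$, so no genuinely hard inequality remains.
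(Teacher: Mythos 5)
Your proof is correct and is essentially the paper's argument in unfolded form: the identity you derive, $\langle Vx-Vy,x-y\rangle -\Vert Vx-Vy\Vert ^{2}=-\frac{1}{\Vert A\Vert ^{2}}\langle d,p-q\rangle -\frac{1}{\Vert A\Vert ^{4}}\Vert A^{\ast }d\Vert ^{2}$, is exactly $\langle Gx-Gy,x-y\rangle -\Vert Gx-Gy\Vert ^{2}$ for $G:=\limfunc{Id}-V=\frac{1}{\Vert A\Vert ^{2}}A^{\ast }(\limfunc{Id}-T)A$, i.e.\ you are re-deriving by hand the equivalence ``$V$ is FNE iff $\limfunc{Id}-V$ is FNE'' that the paper invokes to reduce the claim to the complement operator $G$. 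Past that bookkeeping, you use precisely the paper's two estimates --- the firm nonexpansiveness of $\limfunc{Id}-T$ applied at the points $Ax,Ay$ and the bound $\Vert A^{\ast }d\Vert \leq \Vert A^{\ast }\Vert \cdot \Vert d\Vert =\Vert A\Vert \cdot \Vert d\Vert $ --- so the two proofs coincide in substance.
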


\begin{proof}
(cf. \cite[Theorem 4.6.3]{Ceg12}, where the case $T=P_{Q}$ was proved)
Recall that $U$ is FNE if and only if $\limfunc{Id}-U$ is FNE. Suppose now
that $T$ is FNE, i.e., 
\begin{equation}
\langle (u-Tu)-(v-Tv),u-v\rangle \geq \left\Vert (u-Tu)-(v-Tv)\right\Vert
^{2}  \label{e-IT}
\end{equation}%
for all $u,v\in \mathcal{H}_{1}$. Let $G:=\limfunc{Id}-V=\frac{1}{\left\Vert
A\right\Vert ^{2}}A^{\ast }(\limfunc{Id}-T)A$, where $V:\mathcal{H}%
_{1}\rightarrow \mathcal{H}_{1}$ is a Landweber-type operator given by (\ref%
{e-LtO}). We prove that $G$ is FNE. If we take $u:=Ax$ and $v:=Ay$ for $%
x,y\in \mathcal{H}_{1}$ in inequality (\ref{e-IT}) and apply the inequality $%
\left\Vert A^{\ast }z\right\Vert \leq \left\Vert A^{\ast }\right\Vert \cdot
\left\Vert z\right\Vert $ and the equality $\left\Vert A^{\ast }\right\Vert
=\left\Vert A\right\Vert $, then we obtain

\begin{eqnarray*}
\langle G(x)-G(y),x-y\rangle &=&\left\Vert A\right\Vert ^{-2}\langle A^{\ast
}(Ax-TAx)-A^{\ast }(Ay-TAy,x-y\rangle \\
&=&\left\Vert A\right\Vert ^{-2}\langle (Ax-TAx)-(Ay-TAy),Ax-Ay\rangle \\
&\geq &\left\Vert A\right\Vert ^{-2}\left\Vert (Ax-TAx)-(Ay-TAy)\right\Vert
^{2} \\
&=&\left\Vert A\right\Vert ^{-4}\left\Vert A^{\ast }\right\Vert
^{2}\left\Vert (Ax-TAx)-(Ay-TAy)\right\Vert ^{2} \\
&\geq &\left\Vert A\right\Vert ^{-4}\left\Vert A^{\ast }(Ax-TAx)-A^{\ast
}(Ay-TAy)\right\Vert ^{2} \\
&=&\Vert \left\Vert A\right\Vert ^{-2}A^{\ast }(Ax-TAx)-\left\Vert
A\right\Vert ^{-2}A^{\ast }(Ay-TAy)\Vert ^{2} \\
&=&\left\Vert G(x)-G(y)\right\Vert ^{2}\text{,}
\end{eqnarray*}%
i.e., $G$ is FNE. This yields that a Landweber-type operator $V=\limfunc{Id}%
-G$ is FNE.
\end{proof}

\begin{corollary}
If $T:\mathcal{H}_{2}\rightarrow \mathcal{H}_{2}$ is nonexpansive, then a
Landweber-type operator $V$ defined by (\ref{e-LtO}) is nonexpansive.
\end{corollary}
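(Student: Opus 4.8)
The plan is to obtain this as a direct consequence of Proposition \ref{p-FNE} together with the relaxation facts recorded in the preliminaries, rather than to repeat the quadratic estimate of the previous proof. The key observation is that a nonexpansive operator is, up to halving, firmly nonexpansive: if $T$ is NE, then $S:=\frac{1}{2}(\limfunc{Id}+T)$ is FNE. This is standard and is checked in one line, since expanding $\langle Su-Sv,u-v\rangle -\Vert Su-Sv\Vert ^{2}$ collapses to $\frac{1}{4}(\Vert u-v\Vert ^{2}-\Vert Tu-Tv\Vert ^{2})$, which is nonnegative precisely because $T$ is NE.

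Next I would feed $S$ into Proposition \ref{p-FNE}. Since $S$ is FNE, the associated Landweber-type operator
\[
V_{S}:=\limfunc{Id}+\frac{1}{\Vert A\Vert ^{2}}A^{\ast }(S-\limfunc{Id})A
\]
is FNE. The remaining task is to relate $V_{S}$ to the operator $V$ built from $T$. Because $S-\limfunc{Id}=\frac{1}{2}(T-\limfunc{Id})$, one gets $V_{S}-\limfunc{Id}=\frac{1}{2}(V-\limfunc{Id})$, i.e. $V=\limfunc{Id}+2(V_{S}-\limfunc{Id})=(V_{S})_{2}$, the $2$-relaxation of $V_{S}$.

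Finally I would invoke the fact from the preliminaries that an operator is FNE if and only if all its $\lambda $-relaxations with $\lambda \in [0,2]$ are NE. Applying this to the FNE operator $V_{S}$ at the endpoint $\lambda =2$ yields that $V=(V_{S})_{2}$ is NE, which is the claim.

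There is no genuinely hard step here; the only things to watch are the bookkeeping that turns the halving $S-\limfunc{Id}=\frac{1}{2}(T-\limfunc{Id})$ into the correct relaxation parameter, and the observation that the admissible range for relaxations of an FNE operator is the \emph{closed} interval $[0,2]$, so that the endpoint $\lambda =2$ is allowed. Alternatively, one could mimic the computation in the proof of Proposition \ref{p-FNE} verbatim, replacing the FNE inequality for $T$ by the NE inequality $\Vert Tu-Tv\Vert \le \Vert u-v\Vert $; the estimate then terminates in $\Vert A\Vert ^{-2}(\Vert Ax-Ay\Vert ^{2}-\Vert TAx-TAy\Vert ^{2})\ge 0$, giving $\Vert Vx-Vy\Vert \le \Vert x-y\Vert $ directly.
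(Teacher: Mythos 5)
Your proof is correct and is essentially the paper's own argument: the paper also deduces the corollary from Proposition \ref{p-FNE} via the equivalence that $S$ is FNE if and only if $2S-\limfunc{Id}$ is NE, which is exactly your relaxation step at $\lambda =2$, since $(V_{S})_{2}=2V_{S}-\limfunc{Id}$ and $V=2V_{S}-\limfunc{Id}$ for $S=\frac{1}{2}(\limfunc{Id}+T)$. You merely spell out the bookkeeping that the paper leaves implicit, so no further comparison is needed.
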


\begin{proof}
The Corollary follows easily from Proposition \ref{p-FNE} by an application
the fact that $S$ is FNE if and only if $2S-\limfunc{Id}$ is NE.
\end{proof}

\begin{definition}
\rm\ %
We say that an operator $S:\mathcal{H}\rightarrow \mathcal{H}$ is \textit{%
asymptotically regular }(AR) if 
\[
\lim_{k\rightarrow \infty }\Vert S^{k+1}x-S^{k}x\Vert =0 
\]%
for all $x\in \mathcal{H}$.
\end{definition}

Asymptotically regular nonexpansive operators play an important role in
fixed point iterations. An example of an asymptotically regular operator is
an SQNE one \cite[Theorem 3.4.3]{Ceg12}. Opial \cite[Theorem 1]{Opi67}
proved that any orbit of an AR and NE operator $S$ with $\limfunc{Fix}S\neq
\emptyset $ converges weakly to a fixed point of $S$. It turns out that the
same result one can obtain in a more general case. First we recall a notion
of the demi-closedness principle.

\begin{definition}
\rm\ %
We say that an operator $U:\mathcal{H}\rightarrow \mathcal{H}$ satisfies the 
\textit{demi-closedness} (DC) \textit{principle} if 
\begin{equation}
(x^{k}\rightharpoonup x\text{ and }\Vert Ux^{k}-x^{k}\Vert \rightarrow
0)\Longrightarrow x\in \limfunc{Fix}S\text{.}  \label{e-DC}
\end{equation}%
If (\ref{e-DC}) holds then we also say that $U-\limfunc{Id}$ is \textit{%
demi-closed} at $0$.
\end{definition}

Opial proved that a nonexpansive operator satisfies the DC principle \cite[%
Lemma 2]{Opi67}. Basing on this fact Opial proved in his famous result \cite[%
Theorem 1]{Opi67} that any orbit of a nonexpansive and asymptotically
regular operator $T$ having a fixed point converges weakly to $x^{\ast }\in 
\limfunc{Fix}T$. The demi-closedness principle also holds for a subgradient
projection $P_{f}$, for a continuous convex function $f:\mathcal{H}%
\rightarrow 
\mathbb{R}
$ with $S(f,0):=\{x\in \mathcal{H}:f(x)\leq 0\}\neq \emptyset $, which is
Lipschitz continuous on bounded subsets (see \cite[Theorem 4.2.7]{Ceg12}).

Now we present further properties of a Landweber-type operator. We apply
ideas of \cite[Section 2.4]{Ceg12} to a Landweber-type operator $V$. Recall
that an operator $S_{\tau }:\mathcal{H}_{1}\rightarrow \mathcal{H}_{1}$
defined by $S_{\sigma }x:=x+\tau (x)(Sx-x)$ is called a \textit{generalized
relaxation} of $S:\mathcal{H}_{1}\rightarrow \mathcal{H}_{1}$, where $\tau :%
\mathcal{H}_{1}\rightarrow (0,+\infty )$ is called a \textit{step-size
function} If $\tau (x)\geq 1$ for all $x\in \mathcal{H}_{1}$, then $S_{\tau
} $ is called an \textit{extrapolation }of $S$.

Let $V:=\limfunc{Id}+\frac{1}{\Vert A\Vert ^{2}}A^{\ast }(T-\limfunc{Id})A$
be a Landweber-type operator. Denote 
\begin{equation}
U:=\limfunc{Id}+A^{\ast }(T-\limfunc{Id})A\text{.}  \label{e-U}
\end{equation}%
Then, obviously, 
\begin{equation}
Ux-x=\Vert A\Vert ^{2}(Vx-x)\text{.}  \label{e-UV}
\end{equation}%
Let a step-size function $\sigma :\mathcal{H}_{1}\rightarrow (0,+\infty )$
be defined by 
\begin{equation}
\sigma (x):=\left\{ 
\begin{array}{ll}
\frac{\left\Vert TAx-Ax\right\Vert ^{2}}{\left\Vert A^{\ast
}(TAx-Ax)\right\Vert ^{2}}\text{,} & \text{if }Ax\notin \limfunc{Fix}T\text{,%
} \\ 
1\text{,} & \text{otherwise.}%
\end{array}%
\right.  \label{e-sigma}
\end{equation}%
An operator $U_{\sigma }$ defined by $U_{\sigma }x:=x+\sigma (x)A^{\ast
}(TAx-Ax)$ is a generalized relaxation of $U$. Denoting 
\begin{equation}
\tau (x)=\left\{ 
\begin{array}{ll}
\Vert A\Vert ^{2}\sigma (x)\text{,} & \text{if }Ax\notin \limfunc{Fix}T\text{%
,} \\ 
1\text{,} & \text{otherwise.}%
\end{array}%
\right.  \label{e-ro}
\end{equation}%
we obtain 
\begin{equation}
V_{\tau }x=\left\{ 
\begin{array}{ll}
x+\frac{\left\Vert TAx-Ax\right\Vert ^{2}}{\left\Vert A^{\ast
}(TAx-Ax)\right\Vert ^{2}}A^{\ast }(TAx-Ax)\text{,} & \text{if }Ax\notin 
\limfunc{Fix}T\text{,} \\ 
x\text{,} & \text{otherwise.}%
\end{array}%
\right.  \label{e-ext-Land}
\end{equation}%
By $\left\Vert A^{\ast }u\right\Vert \leq \left\Vert A^{\ast }\right\Vert
\cdot \left\Vert u\right\Vert =\left\Vert A\right\Vert \cdot \left\Vert
u\right\Vert $, we have $\sigma (x)\geq \left\Vert A\right\Vert ^{-2}$ and $%
\tau (x)\geq 1$. Therefore, $V_{\tau }$ is an extrapolation of the
Landweber-type operator $V$. Note, however, that we do not need to know the
norm of $A$ in order to evaluate $V_{\tau }x$. In the theorem below we
present important properties of $V_{\tau }$.

\begin{theorem}
\label{t-FixV}Let $A:\mathcal{H}_{1}\rightarrow \mathcal{H}_{2}$ be a
bounded linear operator with $\left\Vert A\right\Vert >0$, $T:\mathcal{H}%
_{2}\rightarrow \mathcal{H}_{2}$ be an $\alpha $-SQNE operator with $\func{im%
}A\cap \limfunc{Fix}T\neq \emptyset $, where $\alpha \geq 0$. Further, let
an extrapolation $V_{\tau }$ of a Landweber-type operator be defined by (\ref%
{e-ext-Land}). Then:

\begin{enumerate}
\item[$\mathrm{(i)}$] $\limfunc{Fix}V_{\tau }=\limfunc{Fix}V=A^{-1}(\limfunc{%
Fix}T)$;

\item[$\mathrm{(ii)}$] $V_{\tau }$ is $\alpha $-SQNE;

\item[$\mathrm{(iii)}$] If $\alpha >0$ then $V_{\tau }$ is asymptotically
regular;

\item[$\mathrm{(iv)}$] If $T$ satisfies the demi-closedness principle then $%
V_{\tau }$ also satisfies the demi-closedness principle.
\end{enumerate}
\end{theorem}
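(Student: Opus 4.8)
The plan is to prove the four claims in order, with (i) carrying most of the weight, since it simultaneously identifies the fixed-point sets and guarantees that the step-size denominator in (\ref{e-ext-Land}) never vanishes. The engine for everything is a transfer of the SQNE inequality for $T$ to $\mathcal{H}_{1}$ through the adjoint. Rewriting (\ref{e-SQNE}) for the $\alpha$-SQNE operator $T$ gives, for all $u\in\mathcal{H}_{2}$ and all $p\in\limfunc{Fix}T$,
\[\langle Tu-u,\,p-u\rangle \geq \frac{1+\alpha}{2}\Vert Tu-u\Vert^{2}.\]
Choosing $p\in\func{im}A\cap\limfunc{Fix}T$, say $p=Aw$, and setting $u=Ax$, the left-hand side becomes $\langle A^{\ast}(TAx-Ax),\,w-x\rangle$. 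Consequently, if $A^{\ast}(TAx-Ax)=0$ then $0\geq\frac{1+\alpha}{2}\Vert TAx-Ax\Vert^{2}$, forcing $TAx=Ax$. This establishes the equivalence $A^{\ast}(TAx-Ax)=0\Longleftrightarrow Ax\in\limfunc{Fix}T$. Hence the second branch of (\ref{e-ext-Land}) is taken exactly when $Ax\in\limfunc{Fix}T$, the denominator in the first branch is then nonzero, and $\limfunc{Fix}V=\{x:A^{\ast}(TAx-Ax)=0\}=A^{-1}(\limfunc{Fix}T)$. The same dichotomy gives $\limfunc{Fix}V_{\tau}=A^{-1}(\limfunc{Fix}T)$, since $V_{\tau}x=x$ on that set while the nonzero step displaces $x$ off it.

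For (ii) I would fix $z\in\limfunc{Fix}V_{\tau}=A^{-1}(\limfunc{Fix}T)$ and split into the two branches. If $Ax\in\limfunc{Fix}T$ then $V_{\tau}x=x$ and (\ref{e-SQNE}) holds trivially. If $Ax\notin\limfunc{Fix}T$, write $d:=TAx-Ax\neq0$ and $g:=A^{\ast}d\neq0$, so that $V_{\tau}x-x=\frac{\Vert d\Vert^{2}}{\Vert g\Vert^{2}}g$ and $\Vert V_{\tau}x-x\Vert^{2}=\frac{\Vert d\Vert^{4}}{\Vert g\Vert^{2}}$. Expanding $\Vert V_{\tau}x-z\Vert^{2}$ reduces the $\alpha$-SQNE inequality to $2\langle V_{\tau}x-x,\,z-x\rangle\geq(1+\alpha)\Vert V_{\tau}x-x\Vert^{2}$. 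Since $\langle g,\,z-x\rangle=\langle d,\,Az-Ax\rangle$ and $Az\in\limfunc{Fix}T$, the displayed SQNE inequality (with $u=Ax$, $p=Az$) gives $\langle d,\,Az-Ax\rangle\geq\frac{1+\alpha}{2}\Vert d\Vert^{2}$, whence $2\langle V_{\tau}x-x,\,z-x\rangle=2\frac{\Vert d\Vert^{2}}{\Vert g\Vert^{2}}\langle d,\,Az-Ax\rangle\geq(1+\alpha)\frac{\Vert d\Vert^{4}}{\Vert g\Vert^{2}}$, which is exactly what is needed.

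Part (iii) is then immediate: by (ii) with $\alpha>0$ the operator $V_{\tau}$ is SQNE, and $\limfunc{Fix}V_{\tau}\neq\emptyset$ because $\func{im}A\cap\limfunc{Fix}T\neq\emptyset$; thus the cited fact \cite[Theorem 3.4.3]{Ceg12} that every SQNE operator is asymptotically regular applies directly.

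For (iv), suppose $x^{k}\rightharpoonup x$ and $\Vert V_{\tau}x^{k}-x^{k}\Vert\to0$. As $A$ is bounded linear, hence weakly continuous, $Ax^{k}\rightharpoonup Ax$. The decisive estimate comes from (\ref{e-ext-Land}) together with $\Vert A^{\ast}u\Vert\leq\Vert A\Vert\,\Vert u\Vert$: in the first branch $\Vert V_{\tau}x^{k}-x^{k}\Vert=\frac{\Vert TAx^{k}-Ax^{k}\Vert^{2}}{\Vert A^{\ast}(TAx^{k}-Ax^{k})\Vert}\geq\frac{\Vert TAx^{k}-Ax^{k}\Vert}{\Vert A\Vert}$, while in the second branch $TAx^{k}-Ax^{k}=0$; either way $\Vert TAx^{k}-Ax^{k}\Vert\leq\Vert A\Vert\,\Vert V_{\tau}x^{k}-x^{k}\Vert\to0$. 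Setting $y^{k}:=Ax^{k}$ we have $y^{k}\rightharpoonup Ax$ and $\Vert Ty^{k}-y^{k}\Vert\to0$, so the DC principle for $T$ yields $Ax\in\limfunc{Fix}T$, i.e.\ $x\in A^{-1}(\limfunc{Fix}T)=\limfunc{Fix}V_{\tau}$. The main obstacle is really (i): it is precisely there that the hypothesis $\func{im}A\cap\limfunc{Fix}T\neq\emptyset$ is indispensable, for without it the step-size denominator could vanish while $TAx\neq Ax$, leaving $V_{\tau}$ ill-defined.
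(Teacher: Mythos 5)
Your proposal is correct, and for parts (i)--(iii) it follows essentially the paper's route: both arguments rest on transferring the SQNE characterization $\langle Tu-u,p-u\rangle \geq \frac{1+\alpha }{2}\left\Vert Tu-u\right\Vert ^{2}$ through the adjoint via $\langle A^{\ast }(TAx-Ax),w-x\rangle =\langle TAx-Ax,Aw-Ax\rangle $, with the hypothesis $\func{im}A\cap \limfunc{Fix}T\neq \emptyset $ supplying the point $Aw$ needed for the inclusion $\limfunc{Fix}V\subseteq A^{-1}(\limfunc{Fix}T)$; in (ii) you re-derive the relaxed-cutter characterization by expanding $\left\Vert V_{\tau }x-z\right\Vert ^{2}$ rather than citing (\ref{e-rel-cutter}), and you work directly with $d=TAx-Ax$ and $g=A^{\ast }d$ instead of the paper's bookkeeping through $U$, $\sigma $ and $\tau $, but the computation is the same one. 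The genuine divergence is in (iv): the paper proves $\Vert TAx^{k}-Ax^{k}\Vert \rightarrow 0$ by choosing a fixed point $z$, invoking the SQNE inequality (\ref{e-relC}) once more, and then applying Cauchy--Schwarz together with the boundedness of the weakly convergent sequence $\{x^{k}\}$; you instead extract the pointwise bound $\left\Vert TAx-Ax\right\Vert \leq \left\Vert A\right\Vert \cdot \left\Vert V_{\tau }x-x\right\Vert $ directly from the explicit extrapolation formula and $\left\Vert A^{\ast }u\right\Vert \leq \left\Vert A\right\Vert \cdot \left\Vert u\right\Vert $. This is more elementary and in a sense stronger: it uses no fixed point, no SQNE inequality and no boundedness of the sequence, only the structure of the step size, whereas the paper's estimate $\lambda \Vert A\Vert ^{2}\Vert Vx^{k}-x^{k}\Vert \cdot \Vert z-x^{k}\Vert $ genuinely needs $\{x^{k}\}$ bounded. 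A further small merit of your write-up is that it makes explicit a well-definedness point the paper leaves implicit: the equivalence $A^{\ast }(TAx-Ax)=0\Longleftrightarrow Ax\in \limfunc{Fix}T$ established in (i) is exactly what guarantees that the denominator in (\ref{e-ext-Land}) is nonzero on the first branch, and this is where $\func{im}A\cap \limfunc{Fix}T\neq \emptyset $ is indispensable.
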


\begin{proof}
By (\ref{e-rel-cutter}), $T$ is $\alpha $-SQNE, where $\alpha \geq 0$, if
and only if 
\begin{equation}
\lambda \langle Tu-u,y-u\rangle \geq \left\Vert Tu-u\right\Vert ^{2}
\label{e-relC}
\end{equation}%
for all $u\in \mathcal{H}_{2}$ and all $y\in \limfunc{Fix}T$, where $\lambda
=2/(\alpha +1)\in (0,2]$. Note that $z\in A^{-1}(\limfunc{Fix}T)\ $if and
only if $Az\in \limfunc{Fix}T$.

(i) The equality $\limfunc{Fix}V_{\tau }=\limfunc{Fix}V$ is clear, because $%
\sigma (x)>0$ for all $x\in \mathcal{H}_{1}$. Now we prove the second
equality.

$\supseteq $ Let $Az\in \limfunc{Fix}T$. Then $Vz=z+(\frac{1}{\Vert A\Vert
^{2}}A^{\ast }(TAz-Az)=z$.

$\subseteq $ Let $z\in \limfunc{Fix}V$. Then, of course, $A^{\ast
}(TAz-Az)=0 $. Let $w\in \mathcal{H}_{1}$ be such that $Aw\in \limfunc{Fix}T$%
. By (\ref{e-relC}), we have 
\[
\left\Vert TAz-Az\right\Vert ^{2}\leq \lambda \langle TAz-Az,Aw-Az\rangle
=\lambda \langle A^{\ast }(TAz-Az),w-z\rangle =0\text{,} 
\]%
i.e., $Az\in \limfunc{Fix}T$.

(ii) Let $z\in \limfunc{Fix}V_{\sigma }=\limfunc{Fix}V$. By (i) $Az\in 
\limfunc{Fix}T$. (\ref{e-UV}) and (\ref{e-relC}) yield

\begin{eqnarray*}
&&\lambda \langle Vx-x,z-x\rangle \\
&=&\lambda \Vert A\Vert ^{-2}\langle Ux-x,z-x\rangle =\lambda \Vert A\Vert
^{-2}\langle A^{\ast }(TAx-Ax),z-x\rangle \\
&=&\lambda \Vert A\Vert ^{-2}\langle TAx-Ax,Az-Ax\rangle \geq \Vert A\Vert
^{-2}\cdot \left\Vert TAx-Ax\right\Vert ^{2} \\
&=&\Vert A\Vert ^{-2}\sigma (x)\left\Vert A^{\ast }(TAx-Ax)\right\Vert
^{2}=\Vert A\Vert ^{-2}\sigma (x)\left\Vert Ux-x\right\Vert ^{2} \\
&=&\tau (x)\left\Vert Vx-x\right\Vert ^{2}\text{,}
\end{eqnarray*}%
where $\lambda =2/(\alpha +1)\in (0,2]$, $U(x)$, $\sigma (x)$ and $\tau (x)$
are defined by (\ref{e-U}) and (\ref{e-sigma})--(\ref{e-ro}), $x\in \mathcal{%
H}_{1}$. This yields $\lambda \langle Vx-x,z-x\rangle \geq \tau
(x)\left\Vert Vx-x\right\Vert ^{2}$. Multiplying both sides by $\tau (x)$ we
obtain $\lambda \langle V_{\tau }x-x,z-x\rangle \geq \left\Vert V_{\tau
}x-x\right\Vert ^{2}$. By (\ref{e-rel-cutter}), the operator $V_{\sigma }$
is $\alpha $-SQNE.

(iii) Follows from (ii) and from the fact that any SQNE operator is AR (see 
\cite[Theorem 3.4.3]{Ceg12}).

(iv) Suppose, that $T$ satisfies the DC principle, i.e., $%
y^{k}\rightharpoonup y$ together with $\Vert Ty^{k}-y^{k}\Vert \rightarrow 0$
implies $y\in \limfunc{Fix}T$. We prove that $V_{\tau }$ also satisfies the
demi-closedness principle. Let $x^{k}\rightharpoonup x$ and $\Vert V_{\tau
}x^{k}-x^{k}\Vert \rightarrow 0$. The, obviously, $\left\Vert
Vx^{k}-x^{k}\right\Vert \rightarrow 0$, because $\tau (x^{k})\geq 1$.
Clearly, for any $u\in \mathcal{H}_{2}$, 
\[
\lim_{k}\langle Ax^{k}-Ax,u\rangle =\lim_{k}\langle x^{k}-x,A^{\ast
}u\rangle =0\text{,} 
\]%
i.e., $Ax^{k}\rightharpoonup Ax$. Choose an arbitrary $z\in \limfunc{Fix}%
V_{\tau }$. By (i), $Az\in \limfunc{Fix}T$. By (\ref{e-relC}), the
Cauchy--Schwarz inequality and the boundedness of $x^{k}$, we have 
\begin{eqnarray*}
\Vert TAx^{k}-Ax^{k}\Vert ^{2} &\leq &\lambda \langle
TAx^{k}-Ax^{k},Az-Ax^{k}\rangle \\
&=&\lambda \Vert A\Vert ^{2}\langle \frac{1}{\Vert A\Vert ^{2}}A^{\ast
}(TAx^{k}-Ax^{k}),z-x^{k}\rangle \\
&\leq &\lambda \Vert A\Vert ^{2}\Vert Vx^{k}-x^{k}\Vert \cdot \Vert
z-x^{k}\Vert \rightarrow 0\text{ as }k\rightarrow \infty \text{.}
\end{eqnarray*}%
Consequently, $\lim_{k}\Vert T(Ax^{k})-Ax^{k}\Vert =0$. This, together with $%
Ax^{k}\rightharpoonup Ax$ and with the assumption that $T$ satisfies the
demi-closedness principle, gives $Ax\in \limfunc{Fix}T$, i.e., $x\in A^{-1}(%
\limfunc{Fix}T)=\limfunc{Fix}V_{\tau }$ and the proof is completed.
\end{proof}

\bigskip

By an application of the inequality $\tau (x)\geq 1$, $x\in \mathcal{H}_{1}$%
, one can prove that Theorem \ref{t-FixV} implies the following result which
is a slight generalization of a result due to Wang and Xu \cite[Lemma 3.1
and Theorem 3.3]{WX11}.

\begin{corollary}
\label{l-T-cutter}Let $A:\mathcal{H}_{1}\rightarrow \mathcal{H}_{2}$ be a
bounded linear operator with $\left\Vert A\right\Vert >0$, $T:\mathcal{H}%
_{2}\rightarrow \mathcal{H}_{2}$ be an $\alpha $-SQNE operator with $\func{im%
}A\cap \limfunc{Fix}T\neq \emptyset $, where $\alpha \geq 0$. Further, let $%
V:=\limfunc{Id}+\frac{1}{\Vert A\Vert ^{2}}A^{\ast }(T-\limfunc{Id})A$. Then:

\begin{enumerate}
\item[$\mathrm{(i)}$] $\limfunc{Fix}V=A^{-1}(\limfunc{Fix}T)$ and

\item[$\mathrm{(ii)}$] $V$ is $\alpha $-SQNE.
\end{enumerate}

\noindent If, moreover, $T$ satisfies the DC principle, then $V$ also
satisfies the DC principle.
\end{corollary}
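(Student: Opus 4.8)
The plan is to derive the whole statement from Theorem~\ref{t-FixV}, exploiting the single fact that $V$ and its extrapolation $V_{\tau }$ differ only through the step-size $\tau (x)$, which satisfies $\tau (x)\geq 1$ for every $x\in \mathcal{H}_{1}$. With this in hand, part~(i) requires no new argument at all: the identity $\limfunc{Fix}V=A^{-1}(\limfunc{Fix}T)$ is exactly the second equality proved in Theorem~\ref{t-FixV}(i), and that proof never refers to the extrapolation, so it carries over unchanged.

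For part~(ii) I would reuse the computation in the proof of Theorem~\ref{t-FixV}(ii). There, for $z\in \limfunc{Fix}V$ and $\lambda =2/(\alpha +1)$, the intermediate estimate $\lambda \langle Vx-x,z-x\rangle \geq \tau (x)\left\Vert Vx-x\right\Vert ^{2}$ is obtained \emph{before} multiplying through by $\tau (x)$ to pass to $V_{\tau }$. Since $\tau (x)\geq 1$, this estimate already gives $\lambda \langle Vx-x,z-x\rangle \geq \left\Vert Vx-x\right\Vert ^{2}$ for all $x\in \mathcal{H}_{1}$, and by the characterisation~(\ref{e-rel-cutter}) this is precisely the assertion that $V$ is $\alpha $-SQNE. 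In other words, one simply stops one line earlier than in the proof of the theorem.

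For the demi-closedness claim I would run the argument of Theorem~\ref{t-FixV}(iv) with the hypothesis adapted to $V$. Assume $x^{k}\rightharpoonup x$ and $\left\Vert Vx^{k}-x^{k}\right\Vert \rightarrow 0$. Exactly as in that proof, $Ax^{k}\rightharpoonup Ax$ follows from $\langle Ax^{k}-Ax,u\rangle =\langle x^{k}-x,A^{\ast }u\rangle \rightarrow 0$, and for any $z\in \limfunc{Fix}V$ (so that $Az\in \limfunc{Fix}T$ by part~(i)) the chain $\left\Vert TAx^{k}-Ax^{k}\right\Vert ^{2}\leq \lambda \Vert A\Vert ^{2}\left\Vert Vx^{k}-x^{k}\right\Vert \cdot \left\Vert z-x^{k}\right\Vert \rightarrow 0$ holds, using the boundedness of $\{x^{k}\}$. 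The demi-closedness of $T$ then yields $Ax\in \limfunc{Fix}T$, i.e.\ $x\in A^{-1}(\limfunc{Fix}T)=\limfunc{Fix}V$.

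I do not expect a genuine obstacle here, since all the content already lives inside Theorem~\ref{t-FixV}; the only point to watch is that each of the three inequalities being invoked survives the replacement of $V_{\tau }$ by $V$. In the proof of Theorem~\ref{t-FixV}(iv) the extrapolation entered solely through the reduction $\left\Vert V_{\tau }x^{k}-x^{k}\right\Vert \rightarrow 0\Rightarrow \left\Vert Vx^{k}-x^{k}\right\Vert \rightarrow 0$, and in part~(ii) solely through the final multiplication by $\tau (x)$. Removing both of these steps and appealing to $\tau (x)\geq 1$ is exactly what turns the statement for $V_{\tau }$ into the statement for $V$.
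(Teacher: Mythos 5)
Your proposal is correct and coincides with the paper's own route: the paper derives this corollary precisely ``by an application of the inequality $\tau (x)\geq 1$'' from Theorem \ref{t-FixV}, and you have simply made explicit the two places where the extrapolation entered that theorem's proof (the final multiplication by $\tau (x)$ in part (ii), and the initial reduction from $\Vert V_{\tau }x^{k}-x^{k}\Vert \rightarrow 0$ to $\Vert Vx^{k}-x^{k}\Vert \rightarrow 0$ in the demi-closedness argument), so that dropping them yields the statements for $V$ itself. No gaps: part (i) is indeed proved in Theorem \ref{t-FixV} directly for $V$, and the remaining estimates survive the replacement of $V_{\tau }$ by $V$ exactly as you describe.
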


A proof of Corollary \ref{l-T-cutter} can be also found in \cite[Lemma 4.1]%
{Ceg14}. One can obtain similar results for a composition of an
extrapolation of a Landweber-type operator and an SQNE operator.

\begin{corollary}
\label{c-FixR}Let $T:\mathcal{H}_{2}\rightarrow \mathcal{H}_{2}$ $\alpha $%
-SQNE, $U:\mathcal{H}_{1}\rightarrow \mathcal{H}_{1}$ be $\beta $-SQNE with $%
\limfunc{Fix}U\cap A^{-1}(\limfunc{Fix}T)\neq \emptyset $, where $\alpha
,\beta >0$. Further, let $R:=U\circ V_{\tau }$, where $V_{\tau }$ is defined
by (\ref{e-ext-Land}). Then

\begin{enumerate}
\item[$\mathrm{(i)}$] $\limfunc{Fix}R=\limfunc{Fix}U\cap A^{-1}(\limfunc{Fix}%
T)$;

\item[$\mathrm{(ii)}$] $R$ is $\gamma $-SQNE, where $\gamma =(1/\alpha
+1/\beta )^{-1}$;

\item[$\mathrm{(iii)}$] $R$ is asymptotically regular.
\end{enumerate}

\noindent If, moreover, $T$ and $U$ satisfy the DC principle, then $R$ also
satisfies the DC principle.
\end{corollary}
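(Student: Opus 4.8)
The plan is to reduce everything to the composition of two strongly quasi-nonexpansive operators, exploiting that $V_{\tau}$ inherits its good behaviour from $T$. First I would invoke Theorem \ref{t-FixV}: since $T$ is $\alpha$-SQNE with $\func{im}A\cap \limfunc{Fix}T\neq \emptyset$, the operator $V_{\tau}$ is $\alpha$-SQNE with $\limfunc{Fix}V_{\tau}=A^{-1}(\limfunc{Fix}T)$, and whenever $T$ satisfies the DC principle so does $V_{\tau}$. Thus $R=U\circ V_{\tau}$ is a composition of the $\beta$-SQNE operator $U$ with the $\alpha$-SQNE operator $V_{\tau}$, and the hypothesis $\limfunc{Fix}U\cap A^{-1}(\limfunc{Fix}T)\neq \emptyset$ becomes $\limfunc{Fix}U\cap \limfunc{Fix}V_{\tau}\neq \emptyset$. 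The engine of the whole argument is the chained inequality obtained by applying (\ref{e-SQNE}) twice: for any $z\in \limfunc{Fix}U\cap \limfunc{Fix}V_{\tau}$ and any $x$,
\[
\|Rx-z\|^{2}\leq \|V_{\tau}x-z\|^{2}-\beta \|U(V_{\tau}x)-V_{\tau}x\|^{2}\leq \|x-z\|^{2}-\alpha \|V_{\tau}x-x\|^{2}-\beta \|U(V_{\tau}x)-V_{\tau}x\|^{2}.
\]

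For part (i), the inclusion $\limfunc{Fix}U\cap \limfunc{Fix}V_{\tau}\subseteq \limfunc{Fix}R$ is immediate. For the reverse inclusion I would put $Rx=x$ into the chained inequality; the left-hand side then equals $\|x-z\|^{2}$, forcing $\alpha \|V_{\tau}x-x\|^{2}+\beta \|U(V_{\tau}x)-V_{\tau}x\|^{2}\leq 0$, and since $\alpha ,\beta >0$ both displacement terms vanish, giving $V_{\tau}x=x$ and $Ux=x$. For part (ii), writing $a:=V_{\tau}x-x$ and $b:=U(V_{\tau}x)-V_{\tau}x$ so that $Rx-x=a+b$, I would use the elementary bound $\alpha \|a\|^{2}+\beta \|b\|^{2}\geq \gamma (\|a\|+\|b\|)^{2}\geq \gamma \|a+b\|^{2}$ with $\gamma =(1/\alpha +1/\beta )^{-1}=\alpha \beta /(\alpha +\beta )$; the first inequality is just $(\alpha \|a\|-\beta \|b\|)^{2}\geq 0$ after clearing $\alpha +\beta$, and the second is the triangle inequality. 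Substituting into the chained inequality yields $\|Rx-z\|^{2}\leq \|x-z\|^{2}-\gamma \|Rx-x\|^{2}$, i.e. $R$ is $\gamma$-SQNE.

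Part (iii) is then immediate: $\gamma >0$ because $\alpha ,\beta >0$, so $R$ is SQNE and hence asymptotically regular by \cite[Theorem 3.4.3]{Ceg12}. For the DC principle, suppose $x^{k}\rightharpoonup x$ and $\|Rx^{k}-x^{k}\|\rightarrow 0$. Rearranging the chained inequality gives
\[
\alpha \|V_{\tau}x^{k}-x^{k}\|^{2}+\beta \|U(V_{\tau}x^{k})-V_{\tau}x^{k}\|^{2}\leq \|x^{k}-z\|^{2}-\|Rx^{k}-z\|^{2},
\]
and since $|\,\|x^{k}-z\|-\|Rx^{k}-z\|\,|\leq \|Rx^{k}-x^{k}\|\rightarrow 0$ while both norms stay bounded (the sequence $x^{k}$ is bounded and $\|Rx^{k}-z\|\leq \|x^{k}-z\|$), the right-hand side tends to $0$. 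Hence $\|V_{\tau}x^{k}-x^{k}\|\rightarrow 0$ and $\|U(V_{\tau}x^{k})-V_{\tau}x^{k}\|\rightarrow 0$. The DC principle for $V_{\tau}$ applied to $x^{k}\rightharpoonup x$ gives $x\in \limfunc{Fix}V_{\tau}$; moreover $V_{\tau}x^{k}=x^{k}+(V_{\tau}x^{k}-x^{k})\rightharpoonup x$, so the DC principle for $U$ applied to $V_{\tau}x^{k}\rightharpoonup x$ gives $x\in \limfunc{Fix}U$, whence $x\in \limfunc{Fix}R$.

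The main obstacle is the bookkeeping in the DC step: one must extract strong convergence to zero of \emph{both} partial displacements $\|V_{\tau}x^{k}-x^{k}\|$ and $\|U(V_{\tau}x^{k})-V_{\tau}x^{k}\|$ from the single hypothesis $\|Rx^{k}-x^{k}\|\rightarrow 0$, and then feed the intermediate sequence $V_{\tau}x^{k}$ — which converges only weakly to $x$ — into the DC principle for $U$. The quantitative constant $\gamma$ in (ii) is the only genuinely computational point, and it collapses to the perfect square $(\alpha \|a\|-\beta \|b\|)^{2}\geq 0$.
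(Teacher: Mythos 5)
Your proof is correct, and its skeleton coincides with the paper's: both arguments first invoke Theorem \ref{t-FixV} to convert the hypotheses on $T$ into the statements that $V_{\tau }$ is $\alpha $-SQNE with $\limfunc{Fix}V_{\tau }=A^{-1}(\limfunc{Fix}T)$ and inherits the DC principle, so that $R$ becomes a composition of two SQNE operators with a common fixed point (note your implicit use of the fact that $\limfunc{Fix}U\cap A^{-1}(\limfunc{Fix}T)\neq \emptyset $ supplies the nonemptiness $\func{im}A\cap \limfunc{Fix}T\neq \emptyset $ needed for Theorem \ref{t-FixV} --- that is fine). The difference is what happens next: the paper disposes of (i)--(iii) and the DC transfer entirely by citation --- \cite[Proposition 2.10 (i)]{BB96} for the fixed-point identity, \cite[Theorem 2.1.48 (ii)]{Ceg12} for the $\gamma $-SQNE property of the composition, the SQNE-implies-AR theorem for (iii), and \cite[Theorem 4.2]{Ceg14} for the demi-closedness of the composition --- whereas you reprove these composition facts from scratch out of the defining inequality (\ref{e-SQNE}). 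Your chained two-step inequality, the perfect-square estimate $(\alpha \Vert a\Vert -\beta \Vert b\Vert )^{2}\geq 0$ yielding $\gamma =(1/\alpha +1/\beta )^{-1}$, and the limiting argument extracting both $\Vert V_{\tau }x^{k}-x^{k}\Vert \rightarrow 0$ and $\Vert U(V_{\tau }x^{k})-V_{\tau }x^{k}\Vert \rightarrow 0$ from the single hypothesis $\Vert Rx^{k}-x^{k}\Vert \rightarrow 0$ are exactly the standard proofs of the cited results, and the details check: boundedness of the weakly convergent sequence $x^{k}$ together with quasi-nonexpansivity of $R$ (from your part (ii)) makes the right-hand side tend to $0$, and $V_{\tau }x^{k}\rightharpoonup x$ follows since $V_{\tau }x^{k}-x^{k}\rightarrow 0$ strongly. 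What the paper's route buys is brevity; what yours buys is a self-contained, verifiable argument that also makes visible exactly where $\alpha ,\beta >0$ is used --- to annihilate both displacement terms in (i) and to get $\gamma >0$, hence asymptotic regularity, in (iii).
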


\begin{proof}
By Theorem \ref{t-FixV} (i)-(ii) and by the assumption that $\limfunc{Fix}%
U\cap A^{-1}(\limfunc{Fix}T)\neq \emptyset $, $V_{\tau }$ and $U$ are SQNE
operators having a common fixed point. Therefore, (i) follows from \cite[%
Proposition 2.10 (i)]{BB96}. Part (ii) follows now from Theorem \ref{t-FixV}
(ii), from the assumption that $U$ is $\beta $-SQNE and from \cite[Theorem
2.1.48 (ii)]{Ceg12}. Part (iii) follows from (i), (ii) and from \cite[%
Theorem 3.4.3]{Ceg14}. Suppose now, that $T$ and $U$ satisfy the DS
principle. By Theorem \ref{t-FixV}, $V_{\tau }$ satisfies the DC principle.
Therefore, \cite[Theorem 4.2]{Ceg14} yields, that $R$ also satisfies the DC
principle.
\end{proof}

\bigskip

The results presented in Theorem \ref{t-FixV} and Corollary \ref{c-FixR} can
be applied to a proof of weak convergence of sequences generated by the
iteration below. Let $T:\mathcal{H}_{2}\rightarrow \mathcal{H}_{2}$ and $U:%
\mathcal{H}_{1}\rightarrow \mathcal{H}_{1}$ be QNE operators with $\limfunc{%
Fix}U\cap A^{-1}(\limfunc{Fix}T)\neq \emptyset $, $T_{k}:=T_{\lambda _{k}}$
and $U_{k}:=U_{\mu _{k}}$ be their relaxations, where the relaxation
parameters $\lambda _{k},\mu _{k}\in \lbrack \varepsilon ,1-\varepsilon ]$
for some small $\varepsilon >0$. Further, let 
\begin{equation}
V_{k}:=\left\{ 
\begin{array}{ll}
x+\frac{\left\Vert T_{k}Ax-Ax\right\Vert ^{2}}{\left\Vert A^{\ast
}(T_{k}Ax-Ax)\right\Vert ^{2}}A^{\ast }(T_{k}Ax-Ax)\text{,} & \text{if }%
Ax\notin \limfunc{Fix}T\text{,} \\ 
x\text{,} & \text{otherwise.}%
\end{array}%
\right.  \label{e-Vk}
\end{equation}%
be an extrapolation of a Landweber-type operator related to $T_{k}$.
Consider the following iterative process 
\begin{equation}
x^{k+1}=U_{k}V_{k}x^{k}\text{,}  \label{e-xk}
\end{equation}%
where $x^{0}\in \mathcal{H}_{1}$.

\begin{corollary}
Let $T:\mathcal{H}_{2}\rightarrow \mathcal{H}_{2}$ and $U:\mathcal{H}%
_{1}\rightarrow \mathcal{H}_{1}$ be QNE operators with $\limfunc{Fix}U\cap
A^{-1}(\limfunc{Fix}T)\neq \emptyset $ and satisfying the demi-closedness
principle. Then for any starting point $x^{0}\in \mathcal{H}_{1}$ the
sequence $\{x^{k}\}_{k=0}^{\infty }$ generated by (\ref{e-xk}) converges
weakly to a point $x^{\ast }\in \limfunc{Fix}U\cap A^{-1}(\limfunc{Fix}T)$.
\end{corollary}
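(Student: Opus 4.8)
The plan is to read the iteration (\ref{e-xk}) as a non-autonomous process governed by strongly quasi-nonexpansive operators that share one common fixed point set, and then to combine Fej\'{e}r monotonicity with the demi-closedness principle in the manner of Opial. First I would record what relaxation does to a QNE operator. Expanding (\ref{e-SQNE}) for $T$ (with $\alpha =0$) gives $\langle Tu-u,y-u\rangle \geq \tfrac{1}{2}\Vert Tu-u\Vert ^{2}$ for $y\in \limfunc{Fix}T$, and a one-line expansion of $\Vert T_{\lambda _{k}}u-y\Vert ^{2}$ then shows that $T_{k}=T_{\lambda _{k}}$ is $\frac{1-\lambda _{k}}{\lambda _{k}}$-SQNE; since $\lambda _{k}\in \lbrack \varepsilon ,1-\varepsilon ]$ this coefficient is bounded below by $\alpha _{0}:=\frac{\varepsilon }{1-\varepsilon }>0$, and the same holds for $U_{k}=U_{\mu _{k}}$. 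Because $\limfunc{Fix}T_{k}=\limfunc{Fix}T$ and $\Vert T_{k}u-u\Vert =\lambda _{k}\Vert Tu-u\Vert$ with $\lambda _{k}$ bounded away from $0$ and $1$, the operator $T_{k}$ inherits the DC principle from $T$ (likewise $U_{k}$ from $U$). Applying Theorem \ref{t-FixV} to $T_{k}$ then yields $\limfunc{Fix}V_{k}=A^{-1}(\limfunc{Fix}T)$ and that $V_{k}$ is $\alpha _{k}$-SQNE, so all $V_{k}$ and all $U_{k}$ are SQNE with the single common fixed point set $F:=\limfunc{Fix}U\cap A^{-1}(\limfunc{Fix}T)\neq \emptyset$.

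Next I would establish Fej\'{e}r monotonicity together with summability of two separate residuals. Fix $z\in F$ and put $y^{k}:=V_{k}x^{k}$, so that $x^{k+1}=U_{k}y^{k}$; since $Az\in \limfunc{Fix}T=\limfunc{Fix}T_{k}$ we have $z\in \limfunc{Fix}V_{k}\cap \limfunc{Fix}U_{k}$, and the two SQNE inequalities chain as
\[
\Vert x^{k+1}-z\Vert ^{2}\leq \Vert y^{k}-z\Vert ^{2}-\beta _{k}\Vert U_{k}y^{k}-y^{k}\Vert ^{2}\leq \Vert x^{k}-z\Vert ^{2}-\alpha _{k}\Vert V_{k}x^{k}-x^{k}\Vert ^{2}-\beta _{k}\Vert U_{k}y^{k}-y^{k}\Vert ^{2}.
\]
In particular $\Vert x^{k+1}-z\Vert \leq \Vert x^{k}-z\Vert$, so $\{x^{k}\}$ is bounded and $\lim_{k}\Vert x^{k}-z\Vert$ exists for every $z\in F$. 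Telescoping the display and using $\alpha _{k},\beta _{k}\geq \alpha _{0}>0$ gives $\sum_{k}(\Vert V_{k}x^{k}-x^{k}\Vert ^{2}+\Vert U_{k}y^{k}-y^{k}\Vert ^{2})<\infty$, hence both $\Vert V_{k}x^{k}-x^{k}\Vert \rightarrow 0$ and $\Vert U_{k}y^{k}-y^{k}\Vert \rightarrow 0$.

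The crux, and the step I expect to be the main obstacle, is to show that every weak cluster point of $\{x^{k}\}$ lies in $F$: the operators $V_{k},U_{k}$ vary with $k$, so the DC principle (a statement about a single operator) cannot be applied to them directly and must be pushed back to the fixed operators $T$ and $U$. Let $x^{k_{j}}\rightharpoonup x^{\ast }$. From $\Vert y^{k_{j}}-x^{k_{j}}\Vert \rightarrow 0$ we get $y^{k_{j}}\rightharpoonup x^{\ast }$, and from $\Vert U_{k_{j}}y^{k_{j}}-y^{k_{j}}\Vert =\mu _{k_{j}}\Vert Uy^{k_{j}}-y^{k_{j}}\Vert$ with $\mu _{k_{j}}\geq \varepsilon$ we obtain $\Vert Uy^{k_{j}}-y^{k_{j}}\Vert \rightarrow 0$, so the DC principle for $U$ gives $x^{\ast }\in \limfunc{Fix}U$. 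For the second inclusion, note that $V_{k}$ is the extrapolation with step $\geq 1$, whence $\Vert Vx^{k}-x^{k}\Vert \leq \Vert V_{k}x^{k}-x^{k}\Vert \rightarrow 0$ for the plain Landweber operator $V$ related to $T_{k}$; feeding this into the estimate from the proof of Theorem \ref{t-FixV}(iv), namely $\Vert T_{k}Ax^{k}-Ax^{k}\Vert ^{2}\leq 2\lambda _{k}\Vert A\Vert ^{2}\Vert Vx^{k}-x^{k}\Vert \cdot \Vert z-x^{k}\Vert$ (the cutter parameter $2/(\alpha _{k}+1)=2\lambda _{k}$ is bounded and $\Vert z-x^{k}\Vert$ is bounded by Fej\'{e}r monotonicity), gives $\Vert T_{k}Ax^{k}-Ax^{k}\Vert \rightarrow 0$ and hence $\Vert TAx^{k}-Ax^{k}\Vert =\lambda _{k}^{-1}\Vert T_{k}Ax^{k}-Ax^{k}\Vert \rightarrow 0$. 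Since $A$ is bounded linear, $x^{k_{j}}\rightharpoonup x^{\ast }$ forces $Ax^{k_{j}}\rightharpoonup Ax^{\ast }$, so the DC principle for $T$ yields $Ax^{\ast }\in \limfunc{Fix}T$, i.e. $x^{\ast }\in A^{-1}(\limfunc{Fix}T)$. Combining, $x^{\ast }\in F$.

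Finally I would invoke the standard Opial lemma: a bounded sequence for which $\lim_{k}\Vert x^{k}-z\Vert$ exists for each $z\in F$ and all of whose weak cluster points lie in $F$ must converge weakly to a single point of $F$. Uniqueness of the cluster point follows from the identity
\[
\Vert x^{k}-p\Vert ^{2}-\Vert x^{k}-q\Vert ^{2}=2\langle x^{k},q-p\rangle +\Vert p\Vert ^{2}-\Vert q\Vert ^{2}
\]
for two cluster points $p,q\in F$: the left-hand side converges, so $\langle x^{k},q-p\rangle$ converges, and evaluating its limit along subsequences tending weakly to $p$ and to $q$ forces $\Vert p-q\Vert ^{2}=0$. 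Thus $x^{k}\rightharpoonup x^{\ast }\in \limfunc{Fix}U\cap A^{-1}(\limfunc{Fix}T)$, which is the assertion. Note that no convexity of $F$ is needed for this last argument, only $F\neq \emptyset$, which is assumed.
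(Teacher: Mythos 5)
Your proof is correct, and it reaches the conclusion by a genuinely different, self-contained route. The paper's own proof is deliberately short and modular: after computing the SQNE constants of the relaxations $T_{k}=T_{\lambda _{k}}$ and $U_{k}=U_{\mu _{k}}$ exactly as you do, it invokes Corollary \ref{c-FixR}(i)--(ii) to conclude that each composition $U_{k}V_{k}$ is $\gamma _{k}$-SQNE with $\gamma _{k}=(1/\alpha _{k}+1/\beta _{k})^{-1}\in \lbrack \varepsilon /2,1/(2\varepsilon )]$ and $\mathrm{Fix}(U_{k}V_{k})=\mathrm{Fix}\,U\cap A^{-1}(\mathrm{Fix}\,T)$, and then delegates the entire convergence argument to the general result \cite[Theorem 5.1]{Ceg14} on iterations driven by a sequence of uniformly SQNE operators sharing a fixed point set. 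You instead unpack that black box: you chain the two SQNE inequalities separately (so you never need the composition constant of Corollary \ref{c-FixR}), derive Fej\'{e}r monotonicity together with summability of both residuals, transfer the demi-closedness hypothesis from the fixed operators $T$ and $U$ to the $k$-dependent relaxations via $\Vert S_{\lambda }x-x\Vert =\lambda \Vert Sx-x\Vert $ with $\lambda _{k},\mu _{k}\geq \varepsilon $, rerun the estimate from the proof of Theorem \ref{t-FixV}(iv) with the $k$-dependent constant $2\lambda _{k}=2/(\alpha _{k}+1)$, and finish with Opial's argument, proved inline. Your observation that the DC principle, being a statement about a single operator, cannot be quoted verbatim for the varying family $\{V_{k},U_{k}\}$ is exactly the difficulty the paper buries inside the citation, and your handling of it is sound. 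The paper's route buys brevity and reuse of machinery; yours buys independence from \cite[Theorem 5.1]{Ceg14} and an explicit record of where the hypothesis $\lambda _{k},\mu _{k}\in \lbrack \varepsilon ,1-\varepsilon ]$ enters. Two harmless discrepancies: your labels $\alpha _{k},\beta _{k}$ are swapped relative to the paper's, and your stated lower bound $\varepsilon /(1-\varepsilon )$ differs from the paper's interval $[\varepsilon ,1/\varepsilon ]$; both matter only through the existence of a uniform positive lower bound, which holds either way.
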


\begin{proof}
It is easily seen that $U_{k}$ is $\alpha _{k}$-SQNE and $T_{k}$ is $\beta
_{k}$-SQNE, where $\alpha _{k}=\frac{1-\lambda _{k}}{\lambda _{k}}$, $\beta
_{k}=\frac{1-\mu _{k}}{\mu _{k}}$ and $\alpha _{k},\beta _{k}\in \lbrack
\varepsilon ,\frac{1}{\varepsilon }]$. By Corollary \ref{c-FixR} (i)-(ii), $%
\limfunc{Fix}(U_{k}\circ V_{k})=\limfunc{Fix}U\cap A^{-1}(\limfunc{Fix}T)$
and $U_{k}V_{k}$ is $\gamma _{k}$-SQNE with $\gamma _{k}=(\frac{1}{\alpha
_{k}}+\frac{1}{\beta _{k}})^{-1}\in \lbrack \frac{\varepsilon }{2},\frac{1}{%
2\varepsilon }]$. Therefore, the Corollary follows from a more general
result \cite[Theorem 5.1]{Ceg14}.
\end{proof}

\bigskip

\begin{remark}
\rm\ %
A special case of iteration (\ref{e-xk}) was recently presented by L\'{o}pez 
\textit{et al.} \cite{LMWX12}, where $T_{k}$ and $U_{k}$ are subgradient
projections related to subdifferentiable convex functions $c$ and $q$,
respectively, with $C:=\{x\in \mathcal{H}_{1}:c(x)\leq 0\}$ and $Q:=\{y\in 
\mathcal{H}_{2}:q(y)\leq 0\}$. L\'{o}pez \textit{et al.} \cite[Theorem 4.3]%
{LMWX12} proved the weak convergence of $x^{k}$ to a solution of the SFP (%
\ref{e-SFP}). Very recently, Cui and Wang \cite{CW14} studied the split
fixed point problem: find $x\in \limfunc{Fix}V$ with $Ax\in \limfunc{Fix}T$,
where $V:\mathcal{H}_{1}\rightarrow \mathcal{H}_{1}$ and $T:\mathcal{H}%
_{2}\rightarrow \mathcal{H}_{2}$ are $\beta $-demicontractive operators.
Recall that and operator $S:\mathcal{H}\rightarrow \mathcal{H}$ having a
fixed point is called $\beta $-contractive if 
\[
\left\Vert Tx-z\right\Vert ^{2}\leq \left\Vert x-z\right\Vert ^{2}+\beta
\left\Vert Tx-x\right\Vert ^{2}\text{,} 
\]%
for all $x\in \mathcal{H}_{1}$ and $z\in \limfunc{Fix}T$ (cf. \cite{Mou10}).
Cui and Wang applied a generalized relaxation of the operator $U$ defined by
(\ref{e-U}) for solving this problem. Roughly spoken, Cui and Wang \cite[%
Theorem 3.3]{CW14} observed that any orbit of the operator $V_{\lambda
}\circ U_{\rho }$, where the step-size $\rho (x)=\frac{1-\beta }{2}\sigma
(x) $ and the relaxation parameter $\lambda \in (0,1-\beta )$, converges
weakly to a solution of the problem. Their result is interesting, however,
an application of demicontractive operators seems to be artificial, because $%
T_{\frac{1-\beta }{2}}$ is a cutter, consequently $U_{\rho }$ is a cutter,
and $V_{\lambda }$ is strongly quasi-nonexpansive.
\end{remark}

Before we present the next property of a Landweber-type operator, we recall
a notion of an approximately shrinking operator.

\begin{definition}
\rm\ %
(\cite[Definition 3.1]{CZ14}) Let $U:\mathcal{H}\rightarrow \mathcal{H}$ be
quasi-nonexpansive. We say that $U$ is \textit{approximately shrinking }(AS)
if for any bounded subset $D\subseteq \mathcal{H}$ and for any $\eta >0$
there is $\gamma >0$ such that for any $x\in D$ it holds 
\begin{equation}
\Vert Ux-x\Vert <\gamma \Longrightarrow d(x,\func{Fix}U)<\eta \text{.}
\label{e-BS}
\end{equation}
\end{definition}

Important examples of AS operators are the metric projection $P_{C}$ onto a
closed convex subset $C\subseteq \mathcal{H}$ and the subgradient projection 
$P_{f}$ for a convex function $f:%
\mathbb{R}
^{n}\rightarrow 
\mathbb{R}
$ \cite[Lemma 24]{CZ13}.

\begin{theorem}
\label{t-AS}Let $T:%
\mathbb{R}
^{m}\rightarrow 
\mathbb{R}
^{m}$ be a QNE operator with $\func{im}A\cap \limfunc{Fix}T\neq \emptyset $.
Further, let $V:=\limfunc{Id}+\frac{1}{\Vert A\Vert ^{2}}A^{\ast }(T-%
\limfunc{Id})A$ be a Landweber-type operator and $V_{\tau }$ be its
extrapolation, where $\tau $ is given by (\ref{e-ro}). If $T$ is
approximately shrinking then $V$ and $V_{\tau }$ are also approximately
shrinking.
\end{theorem}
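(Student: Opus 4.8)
The plan is to prove the assertion for the unrelaxed Landweber-type operator $V$ first and then to deduce it for the extrapolation $V_{\tau}$ almost for free. By Theorem~\ref{t-FixV}(i) and Corollary~\ref{l-T-cutter}(i) we have $\limfunc{Fix}V_{\tau}=\limfunc{Fix}V=A^{-1}(\limfunc{Fix}T)$, and both operators are QNE (Corollary~\ref{l-T-cutter}(ii) and Theorem~\ref{t-FixV}(ii) with $\alpha=0$), so the AS property is meaningful for each. Since $V_{\tau}x-x=\tau(x)(Vx-x)$ with $\tau(x)\geq 1$, we have $\Vert V_{\tau}x-x\Vert\geq\Vert Vx-x\Vert$; hence once a threshold $\gamma$ works for $V$ on a bounded set $D$ and tolerance $\eta$, the same $\gamma$ works for $V_{\tau}$, because $\Vert V_{\tau}x-x\Vert<\gamma$ forces $\Vert Vx-x\Vert<\gamma$, whence $d(x,\limfunc{Fix}V_{\tau})=d(x,\limfunc{Fix}V)<\eta$. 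So everything reduces to showing that $V$ is AS.

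To this end I would fix a bounded $D\subseteq\mathcal{H}_{1}$ and $\eta>0$ and chain three implications converting ``$\Vert Vx-x\Vert$ small'' into ``$d(x,\limfunc{Fix}V)$ small''. First, fix any $z_{0}\in\limfunc{Fix}V$ (nonempty since $\func{im}A\cap\limfunc{Fix}T\neq\emptyset$) and put $M:=\sup_{x\in D}\Vert z_{0}-x\Vert<\infty$. The very computation from the proof of Theorem~\ref{t-FixV}(iv), applied with $z=z_{0}$ and $\lambda=2$ (as $T$ is QNE), gives the pointwise bound $\Vert TAx-Ax\Vert^{2}\leq 2\Vert A\Vert^{2}\Vert Vx-x\Vert\cdot\Vert z_{0}-x\Vert\leq 2\Vert A\Vert^{2}M\Vert Vx-x\Vert$ for every $x\in D$, so ``$\Vert Vx-x\Vert$ small'' forces ``$\Vert T(Ax)-Ax\Vert$ small'', uniformly on $D$. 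Second, $A(D)$ is a bounded subset of $\mathbb{R}^{m}$, so the AS property of $T$ applied on $A(D)$ converts ``$\Vert T(Ax)-Ax\Vert$ small'' into ``$d(Ax,\limfunc{Fix}T)$ small'', again uniformly in $x\in D$.

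The third and decisive implication transfers this approximate feasibility from $\mathcal{H}_{2}=\mathbb{R}^{m}$ back to $\mathcal{H}_{1}$, and this is where I expect the real work to lie. Write $L:=\func{im}A$, a finite-dimensional (hence closed) subspace of $\mathbb{R}^{m}$, and note that $F:=\limfunc{Fix}T$ is closed: if $z_{n}\in F$ and $z_{n}\to z$ then $\Vert Tz-z_{n}\Vert=\Vert Tz-Tz_{n}\Vert\leq\Vert z-z_{n}\Vert\to 0$, so $Tz=z$. The first subtlety is that ``$d(Ax,F)$ small'' only supplies a nearby fixed point of $T$, which need not lie in $L$, whereas pulling back through $A$ needs a nearby point of $F\cap L$. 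Finite-dimensionality of $L$ repairs this by compactness: along a bounded sequence $Ax_{n}\in L$ with $d(Ax_{n},F)\to 0$, any limit point $w_{\ast}\in L$ satisfies $d(w_{\ast},F)=0$, hence $w_{\ast}\in F\cap L$ (nonempty and closed), and the usual contradiction argument upgrades this to the uniform statement that on $A(D)$, ``$d(Ax,F)$ small'' implies ``$d(Ax,F\cap L)$ small''. The second subtlety is pulling back an approximate preimage without inflating the distance; here I use that $A$ has finite rank (since $\mathcal{H}_{2}=\mathbb{R}^{m}$), so $(\ker A)^{\perp}$ is finite-dimensional and $A$ maps it bijectively onto $L$, giving a constant $c>0$ with $\Vert Av\Vert\geq c\Vert v\Vert$ for all $v\in(\ker A)^{\perp}$. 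Choosing $p\in F\cap L$ that realizes $d(Ax,F\cap L)$, picking $y_{0}$ with $Ay_{0}=p$, and setting $v:=P_{(\ker A)^{\perp}}(x-y_{0})$, one has $Av=Ax-p$, so $d(x,A^{-1}(p))=\Vert v\Vert\leq c^{-1}\Vert Ax-p\Vert=c^{-1}d(Ax,F\cap L)$; since $A^{-1}(p)\subseteq A^{-1}(F)=\limfunc{Fix}V$, this yields $d(x,\limfunc{Fix}V)\leq c^{-1}d(Ax,F\cap L)$.

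Concatenating the three links, ``$\Vert Vx-x\Vert<\gamma$'' $\Rightarrow$ ``$\Vert TAx-Ax\Vert$ small'' $\Rightarrow$ ``$d(Ax,F)$ small'' $\Rightarrow$ ``$d(Ax,F\cap L)$ small'' $\Rightarrow$ ``$d(x,\limfunc{Fix}V)<\eta$'', and tracing the quantifiers backwards produces a single $\gamma>0$ depending only on $D$ and $\eta$; thus $V$ is AS, and $V_{\tau}$ is AS by the opening reduction. The main obstacle is precisely this third step: the two places where finite-dimensionality of $\func{im}A$ is genuinely used, namely locating an honest nearby point of $\limfunc{Fix}T$ inside $\func{im}A$ and pulling it back via the lower bound $\Vert Av\Vert\geq c\Vert v\Vert$. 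Both would break down for a general bounded $A$ into an infinite-dimensional target, which is presumably the reason the theorem is stated with $\mathcal{H}_{2}=\mathbb{R}^{m}$.
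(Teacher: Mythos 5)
The paper itself contains no proof of Theorem \ref{t-AS} --- immediately after the statement it says ``The theorem will be proved elsewhere'' --- so there is no in-paper argument to compare yours against; I can only judge the proposal on its own merits, and on those it is correct. The opening reduction is right: $V_{\tau }x-x=\tau (x)(Vx-x)$ with $\tau (x)\geq 1$ gives $\Vert V_{\tau }x-x\Vert \geq \Vert Vx-x\Vert $, and since $\limfunc{Fix}V_{\tau }=\limfunc{Fix}V$ by Theorem \ref{t-FixV}(i), any threshold $\gamma $ that works for $V$ works verbatim for $V_{\tau }$. The three links are each sound. Link one: (\ref{e-relC}) with $\lambda =2$ (QNE means $\alpha =0$) and $z_{0}\in \limfunc{Fix}V$ yields $\Vert TAx-Ax\Vert ^{2}\leq 2\Vert A\Vert ^{2}\Vert Vx-x\Vert \cdot \Vert z_{0}-x\Vert $, uniformly bounded by $2\Vert A\Vert ^{2}M\Vert Vx-x\Vert $ on the bounded set $D$. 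Link two is the AS hypothesis on the bounded set $A(D)\subseteq
\mathbb{R}
^{m}$. Link three, which you rightly identify as the crux, is where finite-dimensionality of $\mathcal{H}_{2}$ enters twice, and both uses are valid: the compactness-and-contradiction upgrade from ``$d(Ax,\limfunc{Fix}T)$ small'' to ``$d(Ax,\limfunc{Fix}T\cap \func{im}A)$ small'' works because $\func{im}A$ is a closed subspace of $
\mathbb{R}
^{m}$ and bounded closed subsets there are compact; and the pull-back bound $d(x,\limfunc{Fix}V)\leq c^{-1}d(Ax,\limfunc{Fix}T\cap \func{im}A)$ works because $(\ker A)^{\perp }$ has dimension at most $m$, so $A$ restricted to it is a linear bijection onto $\func{im}A$ and hence bounded below by some $c>0$. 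Tracing the quantifiers backwards does produce a single $\gamma >0$ for given $D$ and $\eta $, as the definition (\ref{e-BS}) requires.

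Two cosmetic slips you should repair, neither of which damages the argument. First, in proving that $F:=\limfunc{Fix}T$ is closed you write $\Vert Tz-z_{n}\Vert =\Vert Tz-Tz_{n}\Vert \leq \Vert z-z_{n}\Vert $, which invokes nonexpansiveness; $T$ is only QNE, but QNE gives the needed inequality directly, since $z_{n}\in \limfunc{Fix}T$ implies $\Vert Tz-z_{n}\Vert \leq \Vert z-z_{n}\Vert $ by (\ref{e-SQNE}) with $\alpha =0$, and then $\Vert Tz-z\Vert \leq 2\Vert z-z_{n}\Vert \rightarrow 0$. Second, guard the degenerate case $M:=\sup_{x\in D}\Vert z_{0}-x\Vert =0$ (e.g.\ replace $M$ by $M+1$) so that the choice $\gamma =\gamma ^{\prime 2}/(2\Vert A\Vert ^{2}M)$ is always legitimate. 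With these trifles fixed, your proof stands as a complete, self-contained argument for a statement the paper only announces, and your closing diagnosis is also accurate: both finite-dimensional ingredients of link three genuinely fail for a general bounded $A$ with infinite-dimensional range, which explains the hypothesis $\mathcal{H}_{2}=
\mathbb{R}
^{m}$.
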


The theorem will be proved elsewhere. It turns out that the properties
presented in Theorems \ref{t-FixV} and \ref{t-AS} and in Corollary \ref%
{c-FixR} can be applied to a proof of the strong convergence of sequences
generated by a hybrid steepest descent method with an application of a
Landweber-type operators to a solution of a variational inequality (see \cite%
{CZ13} and \cite{CZ14} for the details).

\section{Examples}

\begin{example}
\rm\ %
Let $a\in \mathcal{H}$ with $\left\Vert a\right\Vert >0$, $A:\mathcal{H}%
\rightarrow 
\mathbb{R}
$ be defined by $Ax=\langle a,x\rangle $. Then $\left\Vert A\right\Vert
=\left\Vert a\right\Vert $. If $Q:=(-\infty ,\beta ],$ where $\beta \in 
\mathbb{R}
$, then the Landweber operator $V$ and its extrapolation $V_{\tau }$ with a
step-size function $\tau $ defined by (\ref{e-ro}) coincide and 
\[
Vx=V_{\tau }x=P_{C}x=x+\frac{(\langle a,x\rangle -\beta )_{+}}{\left\Vert
a\right\Vert ^{2}}a\text{,} 
\]%
where $C=H(a,\beta )_{-}:=\{u\in \mathcal{H}:\langle a,u\rangle \leq \beta
\} $ and $\alpha _{+}:=\max \{\alpha ,0\}$.
\end{example}

\begin{example}
\rm\ %
Let $A:\mathcal{H}_{1}\rightarrow \mathcal{H}_{2}$ be a bounded linear
operator with $\left\Vert A\right\Vert >0$. If $Q=\{b\}$, where $b\in 
\mathcal{H}_{2}$, then the SFP is to find a solution of a linear system $%
Ax=b $. The Landweber operator $V$ related to this problem and its
extrapolation $V_{\tau }$ with a step-size function $\tau $ defined by (\ref%
{e-ro}) have the forms 
\[
Vx=x-\frac{1}{\left\Vert A\right\Vert ^{2}}A^{\ast }(Ax-b) 
\]%
and 
\[
V_{\tau }x=x-\frac{\left\Vert Ax-b\right\Vert ^{2}}{\left\Vert A^{\ast
}(Ax-b)\right\Vert ^{2}}A^{\ast }(Ax-b)\text{.} 
\]%
By Theorem \ref{t-FixV} and Corollary \ref{l-T-cutter}, $V$ and $V_{\tau }$
satisfy the DC principle.
\end{example}

\begin{example}
\rm\ %
Let $A$ be an $m\times n$ real matrix representing a linear operator and $%
b\in 
\mathbb{R}
^{m}$. Suppose without loss of generality that the rows $a_{i}$ of $A$, $%
i=1,2,...,m$, are nonzero vectors. If $Q:=\{u\in \mathcal{%
\mathbb{R}
}^{m}:u\leq b\}=b-%
\mathbb{R}
_{+}^{m}$, then the SFP is to find a solution of a system of linear
inequalities $Ax\leq b$. The Landweber operator $V$ related to this problem
and its extrapolation $V_{\tau }$ with a step-size function $\tau $ defined
by (\ref{e-ro}) have the forms 
\[
Vx=x-\frac{1}{\lambda _{\max }(A^{T}A)}A^{T}(Ax-b)_{+} 
\]%
and 
\[
V_{\tau }x=x-\frac{\left\Vert (Ax-b)_{+}\right\Vert ^{2}}{\left\Vert A^{\ast
}(Ax-b)_{+}\right\Vert ^{2}}A^{T}(Ax-b)_{+}\text{,} 
\]%
where $A^{T}$ denotes the transposed matrix and $\lambda _{\max }(A^{T}A)$
denotes the maximal eigenvalue of $A^{T}A$. By Theorem \ref{t-FixV} and
Corollary \ref{l-T-cutter}, $V$ and $V_{\tau }$ satisfy the DC principle.
Moreover, by Theorem \ref{t-AS}, $V$ and $V_{\tau }$ are approximately
shrinking.
\end{example}

\begin{example}
\rm\ %
Let $f:\mathcal{H}_{2}\rightarrow 
\mathbb{R}
$ be a continuous convex function with $S(f,0):=\{y\in \mathcal{H}%
_{2}:f(y)\leq 0\}\neq \emptyset $. Define a subgradient projection $P_{f}:%
\mathcal{H}_{2}\rightarrow \mathcal{H}_{2}$, related to $f$ by 
\[
P_{f}(y)=\left\{ 
\begin{array}{ll}
y-\frac{f(y)_{+}}{\Vert g_{f}(y)\Vert ^{2}}g_{f}(y)\text{,} & \text{if }%
f(y)>0\text{,} \\ 
y\text{,} & \text{otherwise,}%
\end{array}%
\right. 
\]%
where $g_{f}(y)$ denotes a subgradient of $f$ at $y\in \mathcal{H}_{2}$.
Suppose that $f$ is Lipschitz continuous on bounded subsets. Let $A:\mathcal{%
H}_{1}\rightarrow \mathcal{H}_{2}$ be a bounded linear operator with $%
\left\Vert A\right\Vert >0$, $\lambda \in (0,2]$ and $T:=P_{f,\lambda }$.
Then $T$ is $\alpha $-SQNE with $\alpha =\frac{2-\lambda }{\lambda }$. A
Landweber operator $V$ related to this problem and its extrapolation $%
V_{\tau }$ with a step-size function $\tau $ defined by (\ref{e-ro}) have
the forms 
\[
Vx=\left\{ 
\begin{array}{ll}
x-\frac{\lambda f(Ax)_{+}}{\left( \Vert A\Vert \cdot \Vert g_{f}(Ax)\Vert
\right) ^{2}}A^{\ast }g_{f}(Ax)\text{,} & \text{if }f(Ax)>0\text{,} \\ 
x\text{,} & \text{otherwise}%
\end{array}%
\right. 
\]%
and 
\[
V_{\tau }x=\left\{ 
\begin{array}{ll}
x-\frac{\lambda f(Ax)_{+}}{\Vert A^{\ast }g_{f}(Ax)\Vert ^{2}}A^{\ast
}g_{f}(Ax)\text{,} & \text{if }f(Ax)>0\text{,} \\ 
x\text{,} & \text{otherwise.}%
\end{array}%
\right. 
\]%
Because $P_{f}$ satisfies the DC principle (see \cite[Theorem 4.2.7]{Ceg12}%
), Theorem \ref{t-FixV} and Corollary \ref{l-T-cutter} yield, that $V$ and $%
V_{\tau }$ satisfy the DC principle. Moreover, if $\mathcal{H}_{2}$ is
finite-dimensional, then $P_{f}$ is approximately shrinking (see \cite[Lemma
24]{CZ13}). Therefore, Theorem \ref{t-AS} yields that in this case $V$ and $%
V_{\tau }$ are also approximately shrinking.
\end{example}

\end{document}